\newtheorem{theorem}{Theorem}[section] 
\newtheorem{claim}[theorem]{Claim}
\theoremstyle{definition}
\newtheorem{definition}[theorem]{Definition}
\newtheorem{question}[theorem]{Question}
\theoremstyle{remark}
\newtheorem{remark}[theorem]{Remark}
\newtheorem{notation}[theorem]{Notation}
\newcommand{\rest}{{\restriction}}
\newcommand{\Dom}{{\rm Dom}} 
\newcommand{\Min}{{\rm Min}} 
\newcommand{\tr}{{\rm tr}} 
\newcommand{\Th}{{\rm Th}} 
\newcommand{\cd}{{\rm cd}} 
\newcommand{\suc}{{\rm suc}}
\newcommand{\otp}{{\rm otp}}
\newcommand{\Sym}{{\rm Sym}}
\newcommand{\SSy}{{\rm SSy}}
\newcommand{\BA}{{\rm BA}}
\newcommand{\wilog}{{\rm without loss of generality}}
\newcommand{\then}{{\underline{then}}}
\newcommand{\when}{{\underline{when}}}
\newcommand{\Then}{{\underline{Then}}}
\newcommand{\mn}{{\medskip\noindent}}
\newcommand{\sn}{{\smallskip\noindent}}
\newcommand{\cA}{{\mathcal A}}
\newcommand{\bbR}{{\mathbb R}}
\newcommand{\cH}{{\mathcal H}}
\newcommand{\bbL}{{\mathbb L}}
\newcommand{\bbP}{{\mathbb P}}
\newcommand{\bbN}{{\mathbb N}}
\newcommand{\cP}{{\mathcal P}}
\newcommand{\bbQ}{{\mathbb Q}}
\def\mathunderaccent#1#2 {\let\theaccent#1\skewfactor#2
\mathpalette\putaccentunder}
\def\putaccentunder#1#2{\oalign{$#1#2$\crcr\hidewidth
\vbox to.2ex{\hbox{$#1\skew\skewfactor\theaccent{}$}\vss}\hidewidth}}
\def\name{\mathunderaccent\tilde-3 }
\newenvironment{PROOF}[2][\proofname.]
   {\begin{proof}[#1]}
   {\end{proof}}
\begin{document}

\title {Models of PA: standard sytems without minimal ultrafilters}
\author {Saharon Shelah}
\address{Einstein Institute of Mathematics\\
Edmond J. Safra Campus, Givat Ram\\
The Hebrew University of Jerusalem\\
Jerusalem, 91904, Israel\\
 and \\
 Department of Mathematics\\
 Hill Center - Busch Campus \\ 
 Rutgers, The State University of New Jersey \\
 110 Frelinghuysen Road \\
 Piscataway, NJ 08854-8019 USA}
\email{shelah@math.huji.ac.il}
\urladdr{http://shelah.logic.at}
\thanks{The author thanks Alice Leonhardt for the beautiful typing.
   The author would like to thank
 the Israel Science Foundation (Grant No. 710/07) and the US-Israel
 Binational Science Foundation (Grant No. 2006108) for partial support
 of this research. Paper 944 on author's list}


\subjclass{Primary 03C62; Secondary: 03C50, 03C55, 03E40}

\keywords {model theory, set theoretic model theory, Peano arithmetic,
  forcing, minimal ultrafilter}

\date {January 2, 2018}

\begin{abstract}
We prove that $\bbN$, the standard model of arithmetic, 
has an uncountable elementary
extension $N$ such that there is no ultrafilter on the Boolean Algebra
of subsets of $\bbN$ represented in $N$ which is minimal
(i.e. as in Rudin-Keisler order for partitions represented in $N$).
\end{abstract}

\maketitle
\numberwithin{equation}{section}
\setcounter{section}{-1}
\newpage

\section {Introduction} 

Enayat \cite{Ena06p}, Question III, asked (see Definition \ref{0z.7}(1)):
\begin{question}
\label{q0.7}
Can we prove in ZFC that there is an arithmetically
closed ${\cA} \subseteq {\cP}(\omega)$ such that ${\cA}$ carries
no minimal ultrafilter?

He proved the existence of examples, for 
the stronger notion ``2-Ramsey ultrafilter".  In \cite{Sh:937}
we prove that there is an arithmetically closed Borel set 
$\bold B \subseteq {\cP}(\bbN)$ such that any expansion $\bbN^+$ of
$\bbN$ by any uncountably many members of $\bold B$ has
this property, i.e. the family of definable subsets of $\bbN^+$ carries
no 2-Ramsey ultrafilter.

We deal here with Question \ref{q0.7}, proving that there is 
such a family of cardinality $\aleph_1$, this implies the version in
the abstract; (since it it well-known that every arithmetically closed
family of cardinality at most $\aleph_1$ can be realized as the
standard system of some elementary extension of $\bbN$, as shown by
Knight and Nadel \cite{KnNa82}).  We use 
forcing but the result is proved in ZFC.  
On other problems from \cite{Ena06p} see
Enayat-Shelah \cite{EnSh:936} and \cite{Sh:924}, \cite{Sh:937}.

We thank Shimoni Garti and the referee for helpful comments.
\end{question}

\begin{notation}
\label{0z.1}
1) Let pr:$\omega \times \omega \rightarrow \omega$ be
the standard pairing function (i.e. pr$(n,m) = \binom{n+m}{2} +n$,
so one to one onto two-place function).

\noindent
2) Let ${\cA}$ denote a subset of ${\cP}(\omega)$.

\noindent
3) Let BA$({\cA})$ be the Boolean algebra which ${\cA} \cup 
[\omega]^{< \aleph_0}$ generates.

\noindent
4) Let $D$ denote a non-principal ultrafilter on ${\cA}$, meaning
that $D \subseteq {\cA}$ and there is a unique non-principal
ultrafilter $D'$ on the Boolean algebra $\BA({\cA})$ satisfying $D = D' \cap
{\cA}$, notice that in Definition \ref{0z.7} below 
the distinction between an ultrafilter
on ${\cA}$ and on BA$({\cA})$ makes a difference.

\noindent
5) $\tau$ denotes a vocabulary extending $\tau_{\text{PA}} =
\tau_{\bbN} = \{0,1,+,\times,<\}$, usually countable.

\noindent
6) PA$(\tau)$ is Peano arithmetic for the vocabulary $\tau$.
A model $N$ of PA$(\tau)$ is called ordinary if $N \rest
\tau_{\text{PA}}$ extends $\bbN$; usually the models will be ordinary.

\noindent
7) $\varphi(N,\bar a)$ is $\{b:N \models \varphi[b,\bar a]\}$ where
$\varphi(x,\bar y) \in \bbL(\tau_N)$ and $\bar a \in {}^{\ell g(\bar y)}N$.

\noindent
8) $\Sym(A)$ is the set (or group) of permutations of $N$. 

\noindent
9) For sets $u,v$ of ordinals let OP$_{v,u}$, ``the order preserving
function from $u$ to $v$" be defined by: OP$_{v,u}(\alpha) = \beta$
   \underline{iff} $\beta \in v,\alpha \in u$ and otp$(v \cap \beta) = 
\text{ otp}(u \cap \alpha)$.

\noindent
10) We say $u,v \subseteq \text{ Ord}$ form a $\Delta$-system pair
when otp$(u) = \text{ otp}(v)$ and OP$_{v,u}$ is the identity on
$u \cap v$.
\end{notation}

\begin{definition}
\label{0z.2} 
1)  For ${\cA} \subseteq {\cP}(\omega)$ let ar-cl$({\cA}) = 
\{B \subseteq \omega:B$ is first order
defined in $(\bbN,A_1,\dotsc,A_n)$ for some $n < \omega$ and
$A_1,\dotsc,A_n\in {\cA}\}$.  This is called the arithmetic closure
of ${\cA}$.

\noindent
2) For a model $N$ of PA$(\tau)$ let the standard system of $N$,
$\SSy(N)$ be $\{\varphi(M,\bar a) \cap \bbN:\varphi(x,\bar y) \in
\bbL(\tau)$ and $\bar a \in {}^{\ell g(\bar y)}M\}$ so $\subseteq
   {\cP}(\omega)$ for any ordinary model $M$ isomorphic to $N$, see
   \ref{0z.1}(6).
\end{definition}

\begin{definition}
\label{0z.7}  
Let ${\cA} \subseteq {\cP}(\omega)$.

\noindent
0) Let $\cd_0:\cH(\aleph_0) \rightarrow \omega$ be one to one,
and interpreting $\cH(\aleph_0)$ inside $\bbN$
it is (first order) definable by a bounded formula in $\bbN$,
i.e. $\{\cd_0(x,y):x \in y \in \cH(\aleph_0)\}$ is,
and it maps $\bbN
\times \bbN$ into $\bbN$.  For $h \in {}^\omega \omega$ 
let cd$(h) = \{\text{pr}(n,h(n)):n < \omega\}$, 
where pr is the standard pairing function of $\omega$, see
\ref{0z.1}(1) and generally for $H \subseteq \cH(\aleph_0)$ we let
$\cd(H) := \{\cd_0(x):x \in H\}$; this applies, e.g. to $h \in
{}^{[\omega]^k}\omega$. 

\noindent
1) $D$, an ultrafilter on ${\cA}$, is called minimal \when \,: 
if $h \in {}^\omega \omega$ and $\cd(h) \in {\cA}$ then for 
some $X \in D$ we have $h \rest X$ is constant or one-to-one.

\noindent
2) $D$, an ultrafilter on ${\cA}$, is called Ramsey \when  \,: 
if $k < \omega$ and $h:[\omega]^k \rightarrow \{0,1\}$ and cd$(h) \in
{\cA}$ then for some $X \in D$ we have $h \rest [X]^k$ is
constant.  Similarly $k$-Ramsey.

\noindent
3) $D$ a non-principal ultrafilter on ${\cA}$ is called a
$Q$-point \when \, if $h \in {}^\omega \omega$ is increasing and cd$(h)
\in {\cA}$ \then \, for some increasing sequence $\langle n_i:i
 < \omega\rangle$ we have $i < \omega \Rightarrow h(2i) \le n_i <
 h(2i+1)$ and $\{n_i:i < \omega\} \in D$.
\end{definition}

\begin{remark}
In \cite{Sh:937} we also use the following notions:

\noindent
1) $D$ is called 2.5-Ramsey or self-definably closed \when \,: 
if $\bar h = \langle h_i:i < \omega\rangle$ and $h_i \in 
{}^\omega(i+1)$ and cd$(\bar h) = \{\text{cd}(i,\text{cd}(n,h_i(n)):
i < \omega,n < \omega\}$ belongs to ${\cA}$ \then \, for 
some $g \in {}^\omega \omega$ we have:
cd$(g) \in{\cA}$ and $(\forall i)[g(i) \le i \wedge \{n <
\omega:h_i(n) = g(i)\} \in D]$; this follows 
from 3-Ramsey and implies 2-Ramsey.

\noindent
2) $D$ is weakly definably closed \when \,: if $\langle A_i:i <
\omega\rangle$ is a sequence of subsets of $\omega$ and
$\{\text{pr}(n,i):n \in A_i$ and $i < \omega\} 
\in {\cA}$ then $\{i:A_i \in D\} \in D$, (follows from 2-Ramsey).
\end{remark}

\begin{definition}
\label{0z.18} 
1) $\bbL(\bold Q)$ is first order logic when we add the 
quantifier $\bold Q$ where $(\bold Q x) \varphi$
means that there are uncountable many $x$'s satisfying $\varphi$.

\noindent
2) $\bbL_{\omega_1,\omega}(\bold Q)$ is defined parallely.  

See on those logics Keisler \cite{Ke71}.  We shall use Laver forcing
in the proof of Theorem \ref{a1.3}, so let us define this forcing
notion.  
\end{definition}

\begin{definition}
\label{0z.21}
Let $T \subseteq {}^{\omega >}\omega$ be a subtree.  For $a \in
T$ let $\suc_T(a) = \{a \char 94 \langle i \rangle \in T:i \in
\omega\}$.  The trunk $\tr(T)$ of $T$ is a maximal element $a \in T$ such that
$a \le_T b$ or $b \le_T a$ for every $b \in T$.

Such a tree $T$ will be called a Laver tree iff $s = \tr(T)$ and for
every $t \in T$ such that $s \le t$, the set $\suc_T(t)$ is infinite.

We define the forcing notion $\bbQ$ (= Laver forcing) as follows.  A
condition $T \in \bbQ$ is a Laver tree.  If $S,T \in \bbQ$ then $S
\le_{\bbQ} T$ iff $S \supseteq T$.  If $\bold G \subseteq \bbQ$ is generic,
then $\name\eta[\bold G] := 
\{a \in {}^{\omega >}\omega:\exists T \in \bold G,a$ is the trunk of
$T\}$ will be called a Laver real. 
\end{definition}

\begin{claim}
\label{a1.6}  
If $\boxtimes$ then $\boxplus$ where:
\mn
\begin{enumerate}
\item[$\boxtimes$]  $(a) \quad \bar{\bbQ} = \langle
\bbP_\alpha,\name{\bbQ}_\beta:\alpha \le \alpha(*),\beta <
\alpha(*)\rangle$ is a CS iteration
\sn
\item[${{}}$]   $(b) \quad k(*) < \omega$ and $\beta(k) < \alpha(*) <
\omega_1$ for $k<k(*)$
\sn
\item[${{}}$]  $(c) \quad$ each $\name{\bbQ}_\alpha$ is a Laver
forcing (in $\bold V^{\bbP_\alpha}$) and $\name\eta_\alpha$ its generic
\sn
\item[${{}}$]  $(d) \quad h \in ({}^\omega\omega)^{\bold V}$
\sn
\item[${{}}$]  $(e) \quad p \in \bbP_{\alpha(*)}$
\sn
\item[${{}}$]  $(f) \quad p \Vdash_{\bbP_{\alpha(*)}} 
``\name B_k \subseteq \omega$ and $|\name B_k \cap 
[\name \eta_{\beta(k)}(n+1),\name\eta_{\beta(k)}(n+2))|$

\hskip25pt $\le h(\name \eta_{\beta(k)}(n))$ for every $n$ large enough" for
$k<k(*)$
\sn
\item[$\boxplus$]   for some $p_1,p_2$ and $B^*_k$ for $k<k(*)$ we have
\begin{enumerate}
\item[$(a)$]   $\bbP_{\alpha(*)} \models ``p \le p_\ell"$ for $\ell=1,2$
\sn
\item[$(b)$]   $B^*_k \subseteq \omega$ (from $\bold V$)
\sn
\item[$(c)$]  $p_1 \Vdash ``\name B_k \subseteq^* B^*_k"$
\sn
\item[$(d)$]   $p_2 \Vdash ``\name B_k \subseteq^* (\omega \backslash
B^*_k)"$.
\end{enumerate}
\end{enumerate}
\end{claim}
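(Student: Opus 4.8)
The plan is to prove Claim~\ref{a1.6} by a fusion argument for the countable support iteration $\bar{\bbQ}$, after first extracting from the sparseness clause $\boxtimes(f)$ a finiteness property of forced membership, and then \emph{disjointifying} the generic traces of the sets $\name B_k$ into two pure extensions of $p$. First I would show that $\boxtimes(f)$ forces each $\name B_k$ to be genuinely spread out: for every $q \ge p$ in $\bbP_{\alpha(*)}$ the set $R^k_q := \{x < \omega : q \Vdash x \in \name B_k\}$ is finite. Indeed, if some $R^k_q$ were infinite then, using that a Laver condition may be extended so that its generic $\name\eta_{\beta(k)}$ grows as fast as we please, one extends $q$ to force more than $h(\name\eta_{\beta(k)}(n))$ members of $R^k_q$ into a single interval $[\name\eta_{\beta(k)}(n+1),\name\eta_{\beta(k)}(n+2))$ for some large $n$, contradicting the bound in $\boxtimes(f)$. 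This finiteness is exactly what will let $\subseteq^*$ absorb the part of each $\name B_k$ that no extension can avoid.

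Next, by the standard fusion (with continuous, or rapid, reading) for a countable support iteration of Laver forcings --- legitimate since $\alpha(*) < \omega_1$, so $\bbP_{\alpha(*)}$ is proper and admits the usual $\le_n$ fusion --- I would extend $p$ to a master condition $q_*$ on which, for each $k < k(*)$, the finite trace $\name B_k \cap [\name\eta_{\beta(k)}(n+1),\name\eta_{\beta(k)}(n+2))$ is read continuously from a finite amount of the generic and, by $\boxtimes(f)$, has at most $h(\name\eta_{\beta(k)}(n))$ members. Note also that along any branch these intervals tile a tail of $\omega$, so the traces of a single branch are pairwise disjoint and their union is $\name B_k$ above the stem.

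Then comes the heart of the argument. I would build $p_1,p_2 \ge q_*$ by one interleaved fusion that, already at the first relevant branching, sends $p_1$ and $p_2$ through \emph{disjoint} infinite sets of immediate successors, and thereafter repeatedly thins the Laver successor sets so as to push the left endpoints $\name\eta_{\beta(k)}(n+1)$, hence the whole traces, above a common threshold driven to infinity, alternately for $p_1$ and for $p_2$. The finiteness established above is what makes each thinning possible: at any node only finitely many integers are forced into $\name B_k$ and so cannot be cleared, whence infinitely many successors survive whose new traces avoid any prescribed finite set. The effect is that the integer regions occupied by the $p_1$-traces and by the $p_2$-traces of each $\name B_k$ are disjoint modulo a finite, stem-sized error. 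Finally I set $B^*_k := \bigcup\{\,\name B_k \cap [\nu(n+1),\nu(n+2)) : \nu \in p_1\,\} \in \bold V$, the union of all traces occurring in the tree of $p_1$; then $p_1 \Vdash \name B_k \subseteq^* B^*_k$, since each branch's trace-union is contained in $B^*_k$ up to the stem, while $p_2 \Vdash \name B_k \subseteq^* \omega \setminus B^*_k$, since the $p_2$-traces lie in a region disjoint from $B^*_k$.

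The main obstacle is this disjointification carried out \emph{inside} one countable support fusion: one must simultaneously maintain continuous reading of all $k(*)$ traces at the finitely many coordinates $\beta(k)$, keep every tree in the iteration infinitely branching (so that $p_1,p_2$ remain genuine conditions above $p$), and still force the two families of traces into disjoint integer blocks. The finiteness of forced membership is precisely the input guaranteeing enough room at each thinning; organizing the bookkeeping so that the thresholds for the different $k$ and the different iteration coordinates all cohere is where the real work lies, and since $k(*) < \omega$ the several $\name B_k$ can be handled together within the same fusion.
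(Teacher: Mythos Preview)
Your approach is viable but takes a genuinely different route from the paper's. The paper's key move is a \emph{reduction to a single Laver step}: it first invokes the Laver property of the countable-support iteration (citing \cite{Lv76a}, \cite[Ch.~VI]{Sh:f}) to cover $\name B_* = \bigcup_k \name B_k$ by a $\bbP_1$-name $\name B'$ that is still sparse (with a mildly relaxed bound), which collapses the problem to the case $\alpha(*)=1$. Working now in a single Laver forcing $\bbQ_0$, the paper passes to a condition on which each $p^{[\eta]}$ decides $b_\eta := \name B' \cap [0,\eta(n))$ (with $|b_\eta|$ bounded independently of the last coordinate of $\eta$), and then thins so that at every splitting node $\eta$ the family $\langle b_{\eta\char 94\langle m\rangle}: m\in u_\eta\rangle$ is a $\Delta$-system. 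The two conditions $p_1,p_2$ and the sets $B^*_k$ are then read off from this $\Delta$-system structure.

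You instead stay inside the full iteration $\bbP_{\alpha(*)}$, replacing the Laver-property cover by your finiteness lemma for $R^k_q$ (which is correct and a nice observation) and replacing the $\Delta$-system separation by an alternating-threshold disjointification carried through one fusion. This can be made to work, but it is the harder road: the ``bookkeeping so that the thresholds for the different $k$ and the different iteration coordinates all cohere'' that you flag as the real work is precisely what the paper's reduction to a one-dimensional tree eliminates. One point that needs more care in your write-up is the definition of $B^*_k$ as ``the union of all traces occurring in the tree of $p_1$'': since $p_1$ is an iteration condition, the traces are indexed by finite configurations across several coordinates (not by nodes of a single Laver tree), and you must make explicit how continuous reading pins down each trace from such a configuration and why the resulting union lies in $\bold V$. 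The paper's route buys simplicity at the cost of quoting the Laver property as a black box; yours is more self-contained but substantially more intricate to execute.
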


\begin{PROOF}{\ref{a1.6}}
Clearly letting $\name B_* = \cup\{\name B_k:k <
k(*)\}$ we have
\mn
\begin{enumerate}
\item[$(*)$]  $p \Vdash_{\bbP_{\alpha(*)}}$ ``for every large enough $n$ the
set $\name B_* \cap [\name\eta_0(n+1),\name\eta_0(n+2))$ has $\le
\eta_0(n)$ members".  
\end{enumerate}
\mn
Now by the properties of iterating Laver forcing
(\cite{Lv76a} or see \cite[Ch.VI]{Sh:f}), we have:
\mn
\begin{enumerate}
\item[$(*)$]  if $\bold G_1 \subseteq \bbP_1$ is generic
over $\bold V$ and $\eta = \name\eta_0[\bold G_1]$ \then

\begin{equation*}
\begin{array}{clcr}
\Vdash_{\bbP_{\alpha(*)}/\bold G_1} ``&\text{if } \name B \subseteq
\omega \text{ and in } \name B \cap [\eta(n),\eta(n+1)) \\
  &\text{ there are } \le \eta(n)) \text{ elements for every } n
\text{ large enough} \\
  &\text {\then \, for some } B' \in \bold V[\bold G_1],B' \subseteq
\omega,\name B \subseteq B' \text{ and} \\
  &B' \cap [\eta(n),\eta(n+1))) \text{ has } \le (\eta(n))^n \text{
members for every } n \text{ large enough}".
\end{array}
\end{equation*}
\end{enumerate}
\mn
Now this applies in particular to $\name B = \name B_*$ getting $\name B'$.
Hence \wilog \, $\alpha(*)=1$ so we can replace $\bbP_1$ by $\bbQ_0$,
Laver forcing; also for a dense set of $p \in \bbQ_0$ we have: if
$\eta \in p$ is of length $n+1$ so an increasing sequence of natural
numbers, \then \, $p^{[\eta]} := \{\nu \in p:\nu \trianglelefteq \eta$ or $\eta
\trianglelefteq \nu\}$ forces a value $b_\eta$ to $\name B' \cap
[0,\eta(n))$ so necessarily $|b_\eta| \le \eta(n-1)$ when $n>1$.

By thinning $p$, \wilog \, if $\eta \in p$ and $u_\eta = \{n:\eta \char
94 \langle n \rangle \in p\}$ is infinite (equivalently is not a
singleton) then $\langle b_{\eta\char 94 <n>}:n \in u_\eta\rangle$ is
a $\Delta$-system.

The rest of the proof should be easy, too.
\end{PROOF}
\newpage

\section {No minimal ultrafilter on the standard system} 

\begin{theorem}
\label{a1.3}  
Assume that $\bbN_*$ is an expansion of $\bbN$ with countable
vocabulary or $\bbN_*$ is an ordinary model of PA$_\tau$, for some
countable $\tau \supseteq \tau_{\text{PA}}$ such that $\bbN_*$ is 
countable.  \Then \, there is $M$ such that
\mn
\begin{enumerate}
\item[$(a)$]   $\bbN_* \prec M$
\sn
\item[$(b)$]   $\|M\| = \aleph_1$
\sn
\item[$(c)$]   $\SSy(M)$, the standard system of $M$, see
Definition \ref{0z.2}, has no minimal ultrafilter on it, 
see Definition \ref{0z.7}; moreover
\sn
\item[$(d)$]   there is no $Q$-point on $\SSy(M)$
\sn
\item[$(e)$]  $\SSy(M)$ is arithmetically closed.
\end{enumerate}
\end{theorem}

\begin{PROOF}{\ref{a1.3}}
\bigskip

\noindent
\underline{Stage A}:

Without loss of generality $\bbN_*$ is the Skolem Hull of $\emptyset$
as we can expand it by $\aleph_0$ individual constants.

We shall choose a sentence $\psi \in
\bbL_{\omega_1,\omega}(\bold Q)(\tau^*)$ with 
$\tau^* \supseteq \tau(\bbN_*)$ and prove
that it has a model, and for every model $M^+$ of $\psi$, the model $M^+ \rest
\tau(\bbN_*)$ is as required.  By the completeness theorem for 
$\bbL_{\omega_1,\omega}(\bold Q)$ it is enough to prove that $\psi$ has a
model in some forcing extension; of course it is crucial that
$\psi$ can be explicitly defined hence $\in \bold V$.
\bigskip

\noindent
\underline{Stage B}:

Recall $\cd = \cd_0:\cH(\aleph_0) \rightarrow \omega$ be one-to-one onto and
definable in $\bbN$ by a bounded formula in the natural sense; see \ref{0z.7}.

Let $\bold V_0 = \bold V$ and $\lambda = (2^{\aleph_0})^+$.

Let $\bbR_0 = \text{ Levy}(\aleph_1,2^{\aleph_0})$, let $\bold G_0
\subseteq \bbR_0$ be generic over $\bold V_0$ and let $\bold V_1 =
\bold V_0[\bold G_0]$, i.e. in $\bold V_0^{\bbR_0}$ we have CH.

In $\bold V_1$ we have $\lambda = \aleph_2$ and
let $\bbR_1$ be $\bbP_{\omega_2}$ where 
$\bbP_{\omega_2} = \langle \bbP_\alpha,\name {\bbQ}_\beta:\alpha \le
\omega_2,\beta < \omega_2\rangle$ is a CS iteration, each $\bbQ_\alpha$
is a Laver forcing; there are many other
possibilities, let $\name \eta_\alpha \in {}^\omega \omega$ (increasing)
be the $\bbP_{\alpha +1}$-name of the $\name {\bbQ}_\alpha$-generic
real and $\name \nu_\alpha = \langle\text{cd}(\name \eta_\alpha 
\rest n):n < \omega)\rangle$.  
Let $\bold G_1 \subseteq \bbR_1$ be generic over $\bold V_1$ and
$\bold V_2 = \bold V_1[\bold G_1]$ and let $\eta_\alpha = 
\name \eta_\alpha[\bold G_1],\nu_\alpha = \langle\text{cd}(\eta_\alpha
\rest n):n < \omega\rangle = \name \nu_\alpha[\bold G_1]$.

Let $D^2$ be a non-principal ultrafilter on $\omega$ in the universe
$\bold V_2$.
\mn
\begin{enumerate}
\item[$\boxplus_1$]   In the universe $\bold V_2$ 
let $M_1 = \bbN_*^\omega/D^2$, let $a_\alpha = \eta_\alpha/D^2 \in M_1$
\end{enumerate}
\mn
and note
\mn
\begin{enumerate}
\item[$\boxplus_2$]  $\SSy(M_1) = \cP(\bbN)^{\bold V_2}$ hence is
arithmetically closed
\sn
\item[$\boxplus_3$]   let $f_1 \in \bold V_2$ be the function from
$\lambda = \omega^{\bold V_1}_2 = \omega^{\bold V_2}_2$ 
into $M_1$ defined by $f_1(\alpha) = a_\alpha$.
\end{enumerate}
\bigskip

\noindent
\underline{Stage C}:

In $\bold V_1$ (yes, not in $\bold V_2$) let the forcing notion 
$\bbR_2 := \bbP^+_{\omega_2}$ and the set $K$ be defined as follows
(so $\bold B \in \bold V_1$ below, which is equivalent to $\bold B \in
\bold V_0$, similarly for $u$; so in $\boxplus_4(\alpha),\name A$ is a
$\bbP_{\omega_2}$-name):
\mn
\begin{enumerate}
\item[$\boxplus_4$]   $(\alpha) \quad 
K := \{(\alpha,u,\name A):u \subseteq \lambda$ is
 countable, $\alpha \in u,\name A = \bold B(\ldots,\name \eta_\beta,
\ldots)_{\beta \in u}$,

\hskip30pt $\bold B$ a Borel
function from ${}^{\text{otp}(u)}({}^\omega \omega)$ to
${\cP}(\omega)$ such that 

\hskip30pt $\Vdash_{\bbP_{\omega_2}} ``\name A \cap
[\name \eta_\alpha(n+1),\name\eta_\alpha(n+2))$ has $\le \name
\eta_\alpha(n)$ members; moreover 

\hskip30pt $0 = \text{ lim}_n(|\name A \cap [\name \eta_\alpha(n+1),\name
\eta_\alpha(n+2))/ \name \eta_\alpha(n)|"\}$
\sn
\item[${{}}$]   $(\beta) \quad \bold p \in \bbP^+_{\omega_2}$ iff
\begin{enumerate}
\item[${{}}$]   $(a) \quad \bold p = (p,h) = (p_{\bold p},h_{\bold p})$
\sn
\item[${{}}$]  $(b) \quad p \in \bbP_{\omega_2}$
\sn
\item[${{}}$]  $(c) \quad h$ a function from some finite subset $K_{\bold p}$
of $K$ to $\omega_1$
\sn
\item[${{}}$]  $(d) \quad$ if $(\alpha_\ell,u_\ell,\name A_\ell) \in 
K_{\bold p}$ for $\ell=1,2$ and $h(\alpha_1,u_1,\name A_1) = 
h(\alpha_2,u_2,\name A_2)$ 

\hskip25pt and $u_1 \subseteq \alpha_2$ \then \, 
$p \Vdash_{\bbP_{\omega_2}} ``\name A_1 \cap \name A_2$ is finite"
\end{enumerate}
\item[${{}}$]   $(\gamma) \quad 
\bbP^+_{\omega_2} \models \bold p \le \bold q$ iff:
\begin{enumerate}
\item[${{}}$]  $(a) \quad \bbP_{\omega_2} \models p_{\bold p} \le p_{\bold q}$
\sn
\item[${{}}$]   $(b) \quad h_{\bold p} \subseteq h_{\bold q}$.
\end{enumerate}
\end{enumerate}
\mn
Now
\mn
\begin{enumerate}
\item[$(*)_0$]  if $p \in \bbP_{\omega_2},\alpha < \omega_2$ and $p
\Vdash ``\name A \subseteq \omega$ satisfies $\name A \cap
[\name\eta_\alpha(n+1),\name\eta_\alpha(n+2))$ has $\le \name\eta_\alpha(n)$
members for every $n$ large enough and $0=\lim\langle|\name A \cap
[\name\eta_\alpha(n+1),\name\eta_\alpha(n+2))|/\name\eta_\alpha(n):n <
\omega\rangle$" \then \, we can find a triple 
$(q,u,\name A')$ such that
\begin{enumerate}
\item[$(\alpha)$]  $\bbP_{\omega_2} \models ``p \le q"$
\sn
\item[$(\beta)$]   Dom$(q) = u$
\sn
\item[$(\gamma)$]  $u$ a countable set of ordinals $< \lambda$ (in
$\bold V_1$ equivalently in $\bold V_0$)
\sn
\item[$(\delta)$]  $q \Vdash ``\name A = \name A'"$
\sn
\item[$(\varepsilon)$]  $\name A' = \bold
B(\ldots,\name\eta_{\alpha_i},\ldots)_{i < \text{ otp}(u)}$ where
$\alpha_i$ is the $i$-th member of $u$, for some Borel function
${}^{\text{otp}(u)}({}^\omega \omega)$ to $\cP(\omega)$ so $\bold B \in
\bold V_1$ equivalently $\bold V_0$
\sn
\item[$(\zeta)$]  $q(\alpha_i) = \bold
B_i(\ldots,\name\eta_{\alpha_j},\ldots)_{j<i}$ for every $i < \text{
otp}(u)$ for some Borel fucntion $\bold B_i$ from ${}^i({}^\omega \omega)$
to Laver forcing, of course, $\bold B_i$ is from $\bold V_0$.
\end{enumerate}
\end{enumerate}
\mn
[Why?  Standard proof.]
\mn
\begin{enumerate}
\item[$(*)_1$]  $\bbP^+_{\omega_2}$ satisfies the $\aleph_2$-c.c.
\end{enumerate}
\mn
[Why?  We need a property of the iteration $\langle 
\bbP_\alpha,\name {\bbQ}_\beta:\alpha \le \omega_2,\beta <
\omega_2\rangle$ stated in Claim \ref{a1.6}.  In more detail, given a
sequence $\langle \bold p_\alpha:\alpha < \omega_2\rangle$ of members
of $\bbP^+_{\omega_2}$, for each $\alpha < \omega_2$, let 
$\bold p_\alpha = (p_\alpha,h_\alpha)$; and \wilog \, for each
$(\alpha^*_1,u^*_1,\name A^*_1) \in K_{\bold p_\alpha}$ for some
$u^1,\name A^1$, the tuple $(p_\alpha,u,\name A^1)$
is like $(q,u,\name A')$ in $(*)_0,(\beta)-(\zeta)$ and
$(\alpha,u,\name A) \in \Dom(h_\alpha) \Rightarrow u 
\subseteq \text{ Dom}(p_\alpha)$.  Letting
$u_\alpha = \text{ Dom}(p_\alpha)$, we can 
find a stationary $S \subseteq \{\delta <
\omega_2:\text{ cf}(\delta) = \aleph_1\}$ and $p_*,\gamma(*)$ such that:
\mn
\begin{enumerate}
\item[$\bullet$]   $u_\delta \cap \delta = u_*$ for $\delta \in S$ and
  $u_\alpha \subseteq \delta$ for $\alpha < \delta \in S$
\sn
\item[$\bullet$]   $p_\delta \rest \delta \le p_* \in \bbP_\delta$ for
$\delta \in S$
\sn
\item[$\bullet$]   \wilog \, $p_\delta \rest \delta = p_*$ for $\delta
\in S$
\sn
\item[$\bullet$]   otp$(u_\delta) = \gamma(*)$ for $\delta \in S$
\sn
\item[$\bullet$]   if $\delta_1,\delta_2 \in S$ then the order
preserving function OP$_{u_{\delta_2},u_{\delta_1}}$ from
$u_{\delta_1}$ onto $u_{\delta_2}$ maps $\bold p_{\delta_1}$ to 
$\bold p_{\delta_2}$.
\end{enumerate}
\mn
Let $\delta(*) = \Min(S)$ and $\bold G^1_{\delta(*)} \subseteq
\bbP_{\delta(*)}$ be generic over $\bold V_1$ such that $p_* \in \bold
G^1_{\delta(*)}$.  Now we apply the conclusion of Claim 
\ref{a1.6} to $\bbP_{\omega_2}/\bold G_{\delta(*)}$, the rest should
be clear.

For $\delta \in S$, let $\alpha_0(*) = \otp(u_\delta \backslash
\delta_*),\bold h_\delta$ be the order preserving function from
$\alpha_\delta$ onto $u_\delta \backslash \delta$ and
$(p'_\delta,h'_\delta) \in \bbP_{\alpha_\delta}$ be such that $\bold
b_\delta$ maps $(p'_\delta,h'_\delta)$ to $(p_\delta,h_\delta)$.
Clearly $\alpha_\delta,p'_\delta,h'_\delta$ are the same for all
$\delta \in S$ so call them $\alpha(*),p',h'$ and applying \ref{a1.6}
with $p',(\{\alpha,\name A)$: for some $u$ the tuple $(\alpha,u,\name
A)$ belongs to $\Dom(h)\}$ here stands for
$p,\{(\alpha_k,\name\beta_k):k < k(*)\}$ there and get $p'_1,p'_2$ as
there.

Let $\delta_1 < \delta_2$ be from $S$, let $q_{\delta_1}$ be $\bold
h_{\delta_1}(p'_1),q_{\delta_2}$ be $\bold h_{\delta_2}(p'_2)$.
Easily $p_{\delta_\ell} \le q_{\delta_\ell}$ and $q_{\delta_1} \cup
q_{\delta_2}$ is a common upper bound of $p_{\delta_1},p_{\delta_2}$
in $\bbP^+_{w_2}/\bold G^1_{\delta(*)}$.] 
\mn
\begin{enumerate}
\item[$(*)_2$]   $\bbP^+_{\omega_2}$ collapses $\omega_1$ to $\aleph_0$.
\end{enumerate}
\mn
[Why?  Easy but also we can use $\bbP^+_{\omega_2} \times$
Levy$(\aleph_0,\aleph_1)$ instead.]
\mn
\begin{enumerate}
\item[$(*)_3$]   the function $p \mapsto (p,\emptyset)$ is a
complete embedding of $\bbP_{\omega_2}$ into $\bbP^+_{\omega_2}$.
\end{enumerate}
\mn
[Why?  Should be clear.]
\medskip

\noindent
\underline{Stage D}:  Let $\bold G_2 = 
\bold G^+_1 \subseteq \bbP^+_{\omega_2}$ be
generic over $\bold V_1,\bold V_3 = \bold V_1[\bold G_2]$ and by $(*)_3$ \wilog
\, $\bold G_1 = \{p:(p,h) \in \bold G_2\}$.  So $\bold V_3 = \bold
V_1[\bold G_2]$ is a generic extension of 
$\bold V_2$ and let $f_2 = \cup\{h:(p,h) \in \bold G_2\}$.

So
\mn
\begin{enumerate}
\item[$(*)_4$]   in $\bold V_3$ if $f_2(\alpha_1,u_1,\name A_1) =
f_2(\alpha_2,u_2,\name A_2)$ and $u_1 \subseteq \alpha_2$, \then \,
$\name A_1[\bold G_1] \cap \name A_2[\bold G_1]$ is finite.
\end{enumerate}
\medskip

\noindent
In $\bold V_3$ let $M_2$ be an elementary submodel of $({\cH}
(\beth_\omega),\in,\dotsc,\bold V_\ell \cap
\cH(\beth_\omega),\ldots)_{\ell=0,1,2}$ 
of cardinality $\lambda = \aleph^{\bold V_3}_1$ which
includes $\{\alpha:\alpha \le \lambda\} = \{\alpha:\alpha
\le \omega^{\bold V_3}_1\},\{M_1,f_1,f_2,\bold G_0,\bold G_1,\bold G_2\}$ 
and (the universe of) $M_1$, see end of stage B, note that $\|M_2\|
\subseteq |M_2|$.

Let $f_0$ be a one-to-one function from $M_1$ onto $M_2$, let $M_3$ be a
model such that $f_0$ is an isomorphism from $M_1$ onto $M_3$.  Lastly,
let $M_4$ be $M_3$ expanded by $c_0 = \lambda = \omega^{\bold V_1}_2 =
\omega_1^{\bold V_3},c^{M_4}_1 = \omega^{\bold
V}_1,c^{M_4}_2 = M_1,d^{M_4}_{0,\ell} = \bold G_\ell,d_{1,\ell} =
\bbR_\ell,d^{M_4} = \bbN_*,\langle d^{M_4}_{2,n}:n <
\omega \rangle$ list the members of $\bbN_*,Q^{M_4}_0 = |\bbN_*|,\in^{M_2} = 
\in^{\bold V_3} \rest |M_2|,F^M_0 = f_0,F^{M_4}_1 = f_0
\circ f_1$, see end of Stage B, $F^{M_4}_2 = f_2,P^M_\ell = \bold V_\ell 
\cap M_2$ for $\ell = 0,1,2$ (so
$F_\ell$ is a unary function symbol, $P_\ell$ is a unary predicate)
and lastly $<^M_*$, a linear order of $|M_2| = |M_4|$ of order type
$\omega^{\bold V_3}_1$.

We define the sentence $\psi$: it is the conjunction of the following countable
sets and singletons of sentences of $\bbL_{\aleph_1,\aleph_0}(\bold
Q)$ in the vocabulary $\tau(M_4)$ such that $M^+ \models \psi$ iff:
\mn
\begin{enumerate}
\item[$(A)$]   $M^+ \rest \tau(\bbN_*)$ is isomorphic to $\bbN_*$, of
  cousre, $M^+ \rest \tau(\bbN_*)$ has universe $Q^{M^+}_0$
\sn
\item[$(B)$]   $M^+$ is uncountable, moreover $M^+ \models(\bold Q x)$
($x$ an ordinal $< c_0$)
\sn
\item[$(C)$]   $<^{M^+}_*$ is a linear order
\sn
\item[$(D)$]   every proper initial segment by $<^{M^+}_*$ is countable
\sn
\item[$(E)$]   $(|M^+|,\in^{M^+})$ is a model ZFC$^-$ (even a model
of Th$({\cH}(\beth_\omega)^{\bold V_3},\in))$
\sn
\item[$(F)$]  the function $F^{M^+}_1:\{a:M^+ \models 
``a$ an ordinal $< c_0"\} \rightarrow M^+$ is one-to-one
\sn
\item[$(G)$]  $M^+ \models ``K$ is as above"
\sn
\item[$(H)$]   $F^{M^+}_2:K^{M^+} \rightarrow \{a:M \models ``a$ an
ordinal $< c_1"\}$ is as above
\sn
\item[$(I)$]   $M^+ \models$ ``for every $B$ we have 
$B \in \cP(\bbN) \wedge P_2(B)$ iff $B = A
\cap \bbN$ for some definable subset of $A$ in the model $c_2$".
\end{enumerate}
\mn
It is easy to check that
\mn
\begin{enumerate}
\item[$(*)_5$]   $\psi \in \bold V_0$ 
\sn
\item[$(*)_6$]   $M_4 \models \psi$ in $\bold V_3$.
\end{enumerate}
\mn
Hence as the completeness theorem for $\bbL_{\omega_1,\omega}(\bold Q)$
gives absoluteness
\mn
\begin{enumerate}
\item[$(*)_7$]   $\psi$ has a model in $\bold V = \bold V_0$ call it $M_5$.
\end{enumerate}
\mn
By renaming \wilog 
\mn
\begin{enumerate}
\item[$(*)_8$]  $(a) \quad$ if $M_5 \models$ ``$a$ is the $n$-th natural
number" then $a=n$
\sn
\item[${{}}$]  $(b) \quad$ if $M_5 \models ``A \subseteq \omega"$ then
$A = \{n:M^+ \models ``n \in A"\}$
\sn
\item[${{}}$]  $(c) \quad$ if $M_5 \models ``b \in {}^\omega \omega"$
then $b = \{(n_1,n_2):M^+ \models f(n_1) = n_2\}$
\sn
\item[$(*)_9$]   let $N'_* = M_5 \rest \tau(\bbN_*)$, so isomorphic to
  $N_*$, let $N = M \rest \{\in\}$
\sn
\item[$(*)_{10}$]
\begin{enumerate}
\item[(a)]  let $M'_1$ be $c^{M_5}_2$ naturally defined
\sn
\item[(b)]  so $M'_1$ is a model of $\Th(N'_*) = \Th(N_*),N'_* \prec
  M'_1$ and $\|M'_1\| = \aleph_1$
\sn
\item[(c)]  let $\cA$ be $\SSy(M)$, the standard system of $M$
\end{enumerate}
\end{enumerate}
\mn
Clearly
\mn
\begin{enumerate}
\item[$(*)_{11}$]  $(a) \quad N \models$ ``ZC"
\sn
\item[${{}}$]  $(b) \quad M$ is a model of Th$(\bbN_*)$ and $N_* \prec M$
\sn
\item[$(*)_{12}$]  let $\bbR'_\ell = d^{M^+}_{1,\ell}$ and $\bold
G'_\ell = d^{M^+}_{2,\ell}$ and 
let $\bold V'_\ell = (P^{M^+}_\ell,\in^{M^+})$ for $\ell=0,1,2$.
\end{enumerate}
\medskip

\noindent
\underline{Stage E}:  

Clearly $M$ is an uncountable elementary extension of $\bbN_*$, by
clauses (A),(B) of Stage D and \wilog \, $\|M\| = \aleph_1$, 
so $M$ satisfies clauses (a),(b) of
Theorem \ref{a1.3}.  To prove clause (e) recall $\boxplus_2$  and clause
(I) above hence ${\cA} \subseteq {\cP}(\omega)$ is arithmetically
closed; this implies $\cA$ is a Boolean subalgebra.  Also clause (d)
implies clause (c), anyhow to prove them, 
assume toward contradiction that $D$ is an ultrafilter on ${\cA}$
which is minimal or just a $Q$-point.  Let $X = \{a:N \models ``a$ is an
ordinal $< \omega_1"\}$, so $X$ is really an uncountable set.  For each $a
\in X$ define a sequence $\rho_a \in {}^\omega \omega$ by $\rho(n) =
k$ \underline{iff} $M^+ \models ``F_1(a)(n)=k"$.

Clearly $\rho_\alpha$ is an increasing sequence in 
${}^\omega \omega$, hence by the
assumption toward contradiction, there is $A_a \in D \subseteq \cA$
such that $A_a \cap [\rho_a(n+1),\rho_a(n+2))$ has at most one
element (or just $\le \rho_a(n)$ elements) for each $n < \omega$.

So for some element $\name A_a$ of $N,N \models ``\name A_a$, in
$\bold V'_1$, is a $\bbR_1$-name of a subset of $\omega$ and $\name A_a
[\bold G'_1] = A_a"$.

Clearly $M^+ \models$ ``for some countable subset $u$ of $\omega^{\bold
V'_1}_2 = \omega^{\bold V'_3}_1$ from $\bold V'_1$ and 
Borel function $\bold B$ from $\bold V'_1$ we have 
$A_a = \bold B_a(\ldots,\rho_b,\ldots)_{b \in u_a}$ (so some 
$p \in \bold G^+_2$ forces $\name A_a$ satisfies this)".  So using 
$F^{M^+}_2$ there are $a_1 \ne a_2$ from $X$ such
that the parallel of clause $(\beta)(d)$ of stage C holds, see clause (G) of
stage D, so two members of $D$ are almost disjoint, contradiction.
\end{PROOF}

\begin{remark}
\label{a1.9} 
1) Note that in \ref{a1.3} we can replace $\bbQ_0$ by any forcing 
notion similar enough, see \cite{RoSh:470}.

\noindent
2) We can strengthen \ref{a1.3} by replacing ``$Q$-point" by a weaker
   statement.  

Similarly we can weaken the demands on how ``thin" is $\name B$ in
\ref{a1.6} and in the proof of \ref{a1.3}.
\end{remark}
\newpage

\bibliographystyle{alphacolon}
\bibliography{lista,listb,listx,listf,liste,listy,listz}

\end{document}